\newtheorem{thm}{Theorem}[section]
\newtheorem{prop}[thm]{Proposition}
\newtheorem{lem}[thm]{Lemma}
\numberwithin{equation}{section}
\theoremstyle{definition}
\newtheorem{remark}[thm]{Remark}
\newcommand{\qqed}{\hspace*{\fill}$\Box$}
\newcommand{\im}{\operatorname{im}}
\newcommand{\Db}{{\rm D}^{\rm b}}
\newcommand{\Aut}{{\rm Aut}}
\newcommand{\Pic}{{\rm Pic}}
\newcommand{\Hom}{{\rm Hom}}
\newcommand{\Spec}{{\rm Spec}}
\newcommand{\Hilb}{{\rm Hilb}}
\newcommand{\id}{{\rm id}}
\newcommand{\cal}{\mathcal}
\newcommand{\ka}{{\cal A}}
\newcommand{\kk}{{\cal K}}
\newcommand{\kl}{{\cal L}}
\newcommand{\km}{{\cal M}}
\newcommand{\ko}{{\cal O}}
\newcommand{\kx}{{\cal X}}
\newcommand{\IA}{\mathbb{A}}
\newcommand{\IC}{\mathbb{C}}
\newcommand{\IK}{\mathbb{K}}
\newcommand{\IL}{\mathbb{L}}
\newcommand{\IP}{\mathbb{P}}
\newcommand{\IQ}{\mathbb{Q}}
\newcommand{\IZ}{\mathbb{Z}}
\renewcommand{\to}{\xymatrix@1@=15pt{\ar[r]&}}
\renewcommand{\rightarrow}{\xymatrix@1@=15pt{\ar[r]&}}
\renewcommand{\mapsto}{\xymatrix@1@=15pt{\ar@{|->}[r]&}}
\renewcommand{\twoheadrightarrow}{\xymatrix@1@=15pt{\ar@{->>}[r]&}}
\renewcommand{\hookrightarrow}{\xymatrix@1@=15pt{\ar@{^(->}[r]&}}
\newcommand{\congpf}{\xymatrix@1@=15pt{\ar[r]^-\sim&}}
\renewcommand{\cong}{\simeq}
\begin{document}

\title[Derived equivalent conjugate K3 surfaces]{Derived equivalent conjugate K3 surfaces}
\author[P.\ Sosna]{Pawel Sosna}

\address{Dipartimento di Matematica ``F.\ Enriques'', Universit\`a degli Studi di Milano, Via Cesare Saldini 50,
20133 Milano, Italy}
\email{pawel.sosna@guest.unimi.it}

\begin{abstract} \noindent
We show that there exist a complex projective K3 surface $X$ and an automorphism $\sigma \in \Aut(\IC)$
such that the conjugate K3 surface $X^\sigma$ is a non-isomorphic Fourier--Mukai partner of
$X$.

\vspace{-2mm}\end{abstract}
\maketitle

\maketitle
\let\thefootnote\relax\footnotetext{This work was supported by the SFB/TR 45 `Periods,
Moduli Spaces and Arithmetic of Algebraic Varieties' of the DFG
(German Research Foundation)}

\section{Introduction}
By definition two complex projective K3 surfaces $X$ and $Y$ are called Fourier--Mukai partners (or derived equivalent) if there exists a $\mathbb{C}$-linear  exact equivalence between their derived categories of coherent sheaves. Due to results by Mukai (\cite{Mukai}) and Orlov (\cite{Orlov}) we have geometric and cohomological criteria for K3 surfaces to be derived equivalent. In particular, it follows that any given K3 surface has only finitely many non-isomorphic Fourier--Mukai partners. It is also possible to view derived equivalent K3 surfaces as elements in the orbit of the action of a certain discrete group on the moduli space of (generalized) K3 surfaces (see e.g.\,\cite{Huy}).

There is a different group acting on the moduli space, namely the group of automorphisms of the complex numbers. The action is defined as follows.
Consider a complex projective K3 surface $X$ and $\sigma \in \Aut(\IC)$. Then we define the \emph{conjugate K3 surface} $X^\sigma$ by the fibre product
\[\begin{xy}
\xymatrix{ X^\sigma \ar[r] \ar[d] & X \ar[d] \\
	   \Spec(\mathbb{C}) \ar[r]^{\sigma^*} & \Spec(\mathbb{C}).}
  \end{xy}
\]
Clearly this definition can be applied to any complex variety. In general the process of conjugation can change the homotopy type as was shown in \cite{Serre}, where Serre constructs two conjugate complex projective varieties with different fundamental groups. In our special case the change in geometry caused by conjugation is more subtle.

The K3 surfaces $X$ and $X^\sigma$ will in general be non-isomorphic as schemes over $\IC$, but clearly they are isomorphic over $\IK=\IQ$ and thus there is a $\IQ$-linear exact equivalence $\Db(X)\cong_\IQ \Db(X^\sigma)$. An obvious question is whether one can find examples where there is a $\IC$-linear exact equivalence as well, without $X$ and $X^\sigma$ being isomorphic over $\IC$.

Thus, we would like to understand the connection between the actions of $\Aut(\IC)$ and the above mentioned discrete group on the moduli space of projective K3 surfaces.

The main result is the following theorem which shows that the orbits of the two group actions can intersect in more than one point:

\medskip

\noindent{\bf Theorem} {\it There exist a complex projective K3 surface $X$ and an automorphism $\sigma \in \Aut(\IC)$ such that the conjugate K3 surface $X^\sigma$ is a non-isomorphic complex K3 surface, but there exists a $\IC$-linear equivalence $\Db(X)\congpf \Db(X^\sigma)$.}

\medskip 

\noindent
The basic idea of the proof is to find a curve $C$ in the moduli space of K3 surfaces over $\overline{\IQ}$ which is invariant under a so called Mukai involution, which is a map sending a K3 surface $X$ to a certain moduli space M$_X(v)$ of stable sheaves on $X$. The induced automorphism of the function field $K(C)$ will allow us to produce two non-isomorphic derived equivalent K3 surfaces over $\IC$. Since this automorphism of $K(C)$ extends to an automorphism of $\IC$ these two K3 surfaces will also be conjugate.\newline\noindent
The note is organized as follows: First, we consider moduli spaces of K3 surfaces over $\overline{\IQ}$. Then we define Mukai involutions on these moduli spaces and study their fixed point locus in a special case. This information enables us to construct the above mentioned curve. We also show how this result can be used to produce higher-dimensional examples involving Hilbert schemes of points on K3 surfaces. In the last section it is shown that a similar construction method works, and a similar result holds, for abelian surfaces.\newline
A small appendix provides an alternative proof of one of the results in section 3.
\smallskip

\noindent{\bf Acknowledgements.} This paper is a part of my PhD thesis \cite{Sosna} supervised by Daniel \linebreak Huybrechts whom I would like to thank for a lot of fruitful discussions. I am also thankful to the referee for valuable suggestions. Further I would like to thank Heinrich Hartmann for suggesting the proof of Lemma 6.3.
   
\section{K3 surfaces over $\overline{\IQ}$ and their moduli spaces}
Let $\IK$ be an algebraically closed field of characteristic zero. A \emph{K3 surface} is a smooth two-dimensional projective variety $X$ over $\IK$ with trivial canonical bundle and $H^1(X, \ko_X)=0$. An example of such a surface is given by a smooth quartic in $\IP^3_\IK$, e.g.\,the Fermat quartic $\left\{ \underline{x} \in \IP^3_\IK \; : x_0^4+\ldots +x_3^4=0 \right\}$.

An ample line bundle $L$ on $X$ will be called a \emph{polarisation}. The self-intersection number $(L,L)$ of such an $L$ is called the \emph{degree} of the polarised K3 surface $(X,L)$.  A polarisation is \emph{primitive} if $L$ is not a power of any line bundle on $X$. Using Serre duality and the Riemann--Roch theorem, it is easy to see that $(L,L)$ is even for any line bundle $L$ on $X$. Further, if $L$ is ample it is effective, its Hilbert polynomial is given by $h_L(t)=(\frac{1}{2}(L,L)) t^2+2$, $L^n$ is generated by global sections for $n\geq 2$ and is very ample for $n\geq 3$ (for the latter results see \cite{Saint}).

A \emph{family of K3 surfaces} is a proper and flat morphism $\pi: \kx \rightarrow S$ over a scheme $S$ (where here and in the following `scheme' means a scheme of finite type over the field in question), such that the geometric fibres of $\pi$ are K3 surfaces. A family of (primitively) polarised K3 surfaces is given by a map $\pi$ as above together with a line bundle $\kl$ on $\kx$ which defines a (primitive) polarisation restricted to each geometric fibre. Note that $\kl^n$ is relatively very ample over $S$ for $n\geq 3$.

Using the general theory in \cite{Viehweg} it is possible to construct a coarse quasi-projective moduli scheme $\km^\IK_{2d}$ for primitively polarised K3 surfaces of degree $2d$ over $\IK$ as follows: Consider the Hilbert scheme Hilb$_N^{P}$ representing subvarieties of $\IP^N_\IK$ with Hilbert polynomial $P(x):=n^2dx^2+2$, where $n\geq 3$, $d$ is a natural number (which should be thought of as $(\frac{1}{2}\kl^2)$) and $N=P(1)-1$. Then there exists on open subscheme $U$ of Hilb$^P_N$ representing primitively polarised K3 surfaces together with an embedding into $\IP^N_\IK$ and the GIT quotient $U /PGL(N+1) $ is the coarse moduli scheme in question.

For $\IK=\IC$ there is a different construction (which uses lattice and Hodge theory), which shows that the moduli space $\km_{2d}^\IC$ of polarised complex K3 surfaces is actually 19-dimensional, reduced, irreducible and normal. For details see \cite{K3}.\medskip

We would like to see that $\km^{\overline{\IQ}}_{2d}$ inherits all the above mentioned properties. A first step is the following

\begin{lem}
$\km_{2d}^\IC \cong \km^{\overline{\IQ}}_{2d} \times_{\Spec({\overline{\IQ}})} \Spec(\IC)$.
\end{lem}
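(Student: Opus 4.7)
My plan is to show that each step in the GIT construction of $\km_{2d}^\IK$ recalled above is compatible with the faithfully flat base change $\Spec(\IC)\to\Spec(\overline{\IQ})$, and then pass to the quotient.

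First, I would recall that the Hilbert scheme $\mathrm{Hilb}_N^{P}$ is defined functorially over $\Spec(\IZ)$ and its formation commutes with arbitrary base change; in particular one has $\mathrm{Hilb}_N^{P,\IC}\cong \mathrm{Hilb}_N^{P,\overline{\IQ}}\times_{\Spec(\overline{\IQ})}\Spec(\IC)$.

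Next, I would verify that the open subscheme $U\subset \mathrm{Hilb}_N^{P}$ parametrizing pluricanonically embedded primitively polarized K3 surfaces of degree $2d$ is cut out by open conditions that are stable under flat base change: smoothness of the geometric fibers, triviality of the relative canonical bundle, the vanishing $R^1\pi_*\ko=0$, primitivity of the restricted polarization, and the condition that the embedding be the complete $n$-th pluricanonical one. Each of these can be checked geometrically fiber by fiber and is preserved under extensions of algebraically closed fields, which yields $U_\IC\cong U_{\overline{\IQ}}\times_{\Spec(\overline{\IQ})}\Spec(\IC)$.

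Finally, I would appeal to the compatibility of GIT quotients by reductive group schemes with flat base change. The group $PGL(N+1)$ is reductive and defined over $\Spec(\IZ)$, and Viehweg's construction produces a geometric quotient $U_\IK/PGL(N+1)_\IK=\km_{2d}^\IK$; since geometric quotients by flat affine reductive group schemes are preserved under flat base change, passing to the quotient combined with the two preceding steps gives the desired isomorphism $\km_{2d}^\IC\cong \km_{2d}^{\overline{\IQ}}\times_{\Spec(\overline{\IQ})}\Spec(\IC)$.

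The main obstacle is justifying this last step rigorously, because the base change $\Spec(\IC)\to\Spec(\overline{\IQ})$ is faithfully flat but not of finite type. The way I would handle this is to write $\IC$ as a filtered colimit of finitely generated, hence smooth (even \'etale in codimension zero) $\overline{\IQ}$-subalgebras $A_\alpha$, verify the base-change statement for each finite-type flat morphism $\Spec(A_\alpha)\to\Spec(\overline{\IQ})$ using the standard compatibility of Viehweg's GIT quotient with flat base change, and then take the limit. The other steps are essentially bookkeeping once one has explicitly recalled the functorial description of the Hilbert scheme and the open locus $U$.
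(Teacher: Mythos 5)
Your proposal is correct and follows essentially the same route as the paper, which disposes of the lemma in one line by observing that both sides are GIT quotients and citing the compatibility of such quotients with field extensions (Mumford--Fogarty, Prop.\ 1.14); your version merely spells out the base-change compatibility of the Hilbert scheme, the open locus $U$, and the quotient, and your limit argument for the non-finite-type extension $\overline{\IQ}\subset\IC$ is unnecessary since the cited universality of the quotient applies to arbitrary base change. One small slip: the K3 surfaces in $U$ are embedded by powers $\kl^n$ of the polarization, not pluricanonically (the canonical bundle is trivial), though this does not affect the argument.
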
 

\begin{proof}
Both schemes are constructed as GIT-quotients and this is compatible with field extensions, cf. \cite[Prop. 1.14]{GIT}.
\end{proof}

Using the lemma we can now deduce

\begin{prop}
$\km^{\overline{\IQ}}_{2d}$ is an integral 19-dimensional scheme.
\end{prop}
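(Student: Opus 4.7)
The plan is to use Lemma 2.1 to transfer the known geometric properties of $\km^{\IC}_{2d}$ (which is $19$-dimensional, reduced, and irreducible by the lattice/Hodge-theoretic construction cited as \cite{K3}) back to $\km^{\overline{\IQ}}_{2d}$ along the base change $\Spec(\IC) \to \Spec(\overline{\IQ})$. The key observation is that this base change is faithfully flat, because $\IC$ is faithfully flat over $\overline{\IQ}$ (any field extension is). Faithfully flat morphisms of finite type between Noetherian schemes are very well-behaved with respect to exactly the three properties we need.

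First I would handle the dimension. Since $\km^{\overline{\IQ}}_{2d}$ is of finite type over $\overline{\IQ}$ (being constructed as a GIT-quotient of an open subscheme of a Hilbert scheme over $\overline{\IQ}$), and base change by an extension of algebraically closed fields preserves dimension for schemes of finite type, one has $\dim \km^{\overline{\IQ}}_{2d} = \dim (\km^{\overline{\IQ}}_{2d} \times_{\overline{\IQ}} \IC) = \dim \km^{\IC}_{2d} = 19$. Next I would treat reducedness: if $\km^{\overline{\IQ}}_{2d}$ had a nonzero nilpotent on some affine open $\Spec(A)$, then after tensoring with $\IC$ this nilpotent would remain nonzero (as $A \to A \otimes_{\overline{\IQ}} \IC$ is injective by faithful flatness), contradicting reducedness of $\km^{\IC}_{2d}$. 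Equivalently, reducedness descends along faithfully flat morphisms.

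For irreducibility I would argue topologically: the projection $p : \km^{\overline{\IQ}}_{2d} \times_{\overline{\IQ}} \IC \to \km^{\overline{\IQ}}_{2d}$ is surjective (again because the morphism of base schemes is faithfully flat). The continuous image of an irreducible topological space under any continuous map is irreducible, and $\km^{\IC}_{2d}$ is irreducible, so $\km^{\overline{\IQ}}_{2d}$ is irreducible. Combined with reducedness this gives integrality.

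I do not expect any serious obstacle here; the proof is a routine application of faithfully flat descent together with the fact that dimension is stable under extension of the base field. The only mild subtlety is that irreducibility does not in general ascend along faithfully flat base change (a geometrically irreducible scheme is needed for that), but in our situation we are descending, not ascending, so surjectivity of $p$ suffices.
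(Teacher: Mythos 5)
Your proposal is correct and follows essentially the same route as the paper: irreducibility is deduced from surjectivity of the projection $\km^{\overline{\IQ}}_{2d}\times_{\overline{\IQ}}\IC \to \km^{\overline{\IQ}}_{2d}$ and irreducibility of $\km^{\IC}_{2d}$, reducedness is checked on an affine cover via the injection $A \to A\otimes_{\overline{\IQ}}\IC$, and the dimension statement is the standard invariance under extension of algebraically closed base fields. You merely make explicit the faithful flatness underlying each step, which the paper leaves implicit.
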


\begin{proof}
Since $\km^{\overline{\IQ}}_{2d} \times_{\Spec({\overline{\IQ}})} \Spec(\IC)$ is irreducible, so is $\km^{\overline{\IQ}}_{2d}$ being its image under the projection. Further, $\km^{\overline{\IQ}}_{2d}$ is reduced, since $\km_{2d}^\IC$ is and we can check this property using an affine cover. The statement about the dimension is clear. 
\end{proof}

We would also like to control the singularities of $\km^{\overline{\IQ}}_{2d}$. This is given by

\begin{prop}
$\km^{\overline{\IQ}}_{2d}$ is a normal scheme.
\end{prop}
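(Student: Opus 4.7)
The natural plan is to run a faithfully flat descent argument, leaning on the previous lemma which identifies the complex moduli space with the base change of the $\overline{\IQ}$-moduli space.

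First I would observe that the flat base change morphism
\[
p \colon \km^{\overline{\IQ}}_{2d} \times_{\Spec(\overline{\IQ})} \Spec(\IC) \longrightarrow \km^{\overline{\IQ}}_{2d}
\]
is faithfully flat, since it is obtained by base change from the faithfully flat morphism $\Spec(\IC)\to\Spec(\overline{\IQ})$ (a field extension is automatically faithfully flat). By Lemma~2.1 the source is canonically isomorphic to $\km_{2d}^{\IC}$, which was recalled above to be normal.

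Next I would invoke the standard descent result that normality is a property which descends along faithfully flat morphisms of locally Noetherian schemes: if $A\to B$ is a faithfully flat homomorphism of Noetherian rings and $B$ is normal, then so is $A$. Indeed, locally at a point $y\in\km^{\overline{\IQ}}_{2d}$ one picks a point of the fiber of $p$ above $y$ to get a faithfully flat local homomorphism between the corresponding local rings, and applies Matsumura's result (EGA~IV, 6.5.4; or Matsumura CRT, Theorem 23.9). Since normality can be checked stalk-by-stalk, this immediately gives normality of $\km^{\overline{\IQ}}_{2d}$.

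There is really no serious obstacle here; the only thing to be careful about is that we do need $\km^{\overline{\IQ}}_{2d}$ to be Noetherian so that the descent statement applies, but this is built in from the fact that it was constructed as a GIT quotient of a quasi-projective scheme of finite type over $\overline{\IQ}$. An alternative, equally short route would be to verify Serre's criterion $(R_1)+(S_2)$ for $\km^{\overline{\IQ}}_{2d}$ by descending both conditions separately from $\km_{2d}^{\IC}$, but the direct descent of normality seems cleaner.
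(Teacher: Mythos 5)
Your argument is correct, and it takes a different (though closely related) route from the paper. The paper also reduces to an affine open $\Spec(A)\subset\km^{\overline{\IQ}}_{2d}$ and uses that $A\otimes_{\overline{\IQ}}\IC$ is normal, but it then quotes a theorem of Tousi--Yassemi on tensor products of algebras over a field ($R\otimes_{\IK}S$ Noetherian is normal if and only if $R$ and $S$ are), applied with $S=\IC$. You instead invoke the general descent of normality along a faithfully flat morphism of Noetherian schemes (EGA~IV 6.5.4 / Matsumura), checked stalkwise via the flat local homomorphisms $\ko_{Y,y}\to\ko_{X,x}$. Your version uses only the descent direction, which is all that is needed, and rests on a more standard and more general piece of machinery: it would work for any faithfully flat Noetherian base change, not just extension of the base field. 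The paper's cited result is sharper in that it also gives the ascent direction (normality of $A$ implies normality of $A\otimes_{\overline{\IQ}}\IC$, which in general requires the ground field situation), but that direction is not used in the proof. Your remarks on Noetherianity (built in from the finite-type GIT construction) and the alternative via Serre's criterion $(R_1)+(S_2)$ are both accurate; indeed the standard proof of the descent statement you quote proceeds exactly by descending $(R_1)$ and $(S_2)$ separately.
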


\begin{proof}
We will use the following result from commutative algebra: \newline\noindent
Let $R$ and $S$ be two $\IK$-algebras such that $R\otimes_\IK S$ is Noetherian. Then $R\otimes_\IK S$ is normal if and only if $R$ and $S$ are normal (this is a special case of \cite[Thm.\ 1.6]{TY}).\smallskip

\noindent
Since normality is a local property, we may take an open affine subset $\Spec(A)$ in $\km^{\overline{\IQ}}_{2d}$. Then, by normality of $\km^\IC_{2d}$, we have that $A\otimes_{\overline{\IQ}} \IC$ is normal (and of course Noetherian) and hence $A$ is.
\end{proof}

\section{Mukai involutions}

In this section we will study so called Mukai involutions on $\km^{\overline{\IQ}}_{2d}$. We will use several results obtained in \cite{Stellari} where the action of the group of Mukai involutions on $\km^\IC_{2d}$ was investigated. In general, a good reference for results on moduli spaces of sheaves is \cite{HL}.\smallskip 
   
For a complex $2d$-polarised K3 surface $(X,L)$ consider the moduli space $M_X=M_X(v)$ of stable sheaves on $X$ whose numerical invariants are given by a Mukai vector $v \in H^0(X,\IZ)\oplus H^2(X,\IZ)\oplus H^4(X,\IZ)$. We will only consider vectors of the form $v=(r,L,s)$ fulfilling \linebreak (a) $L^2=2rs=2d$, (b) gcd$(r,s)=1$ and (c) $r\leq s$. These choices ensure that $M_X$ is a fine moduli space and a $2d$-polarised K3 surface. We will use the same notation over $\overline{\IQ}$. In this case $M_X$ is also in $\km^{\overline{\IQ}}_{2d}$, which follows from the fact that this is true over $\IC$ and that any line bundle on the K3 surface $(M_X)_\IC$ is already defined over $\overline{\IQ}$. For a proof of the last statement cf.\ \cite[Prop.\ 5.4]{Huy3}. We define the \emph{Mukai involution} $g$ to be the map over $\overline{\IQ}$ sending $X$ to $M_X$. For the discussion of the fact that $g_\IC$, and therefore also $g$, is indeed an involution cf.\ subsection 2.1 in \cite{Stellari}. 

\begin{prop}\label{mor}
The Mukai involution $g$ is a morphism.
\end{prop}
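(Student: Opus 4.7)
The plan is to produce $g$ as the classifying morphism attached to a relative moduli space of stable sheaves, following the strategy used in \cite{Stellari} over $\IC$. All of the ingredients (the Hilbert scheme, the GIT quotient constructing $\km^{\overline{\IQ}}_{2d}$, and Mukai's/Simpson's relative moduli of semistable sheaves) are algebraic in nature and go through over an arbitrary Noetherian base of characteristic zero, so the argument works verbatim over $\overline{\IQ}$.

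Concretely, I would start with the scheme $U \subset \text{Hilb}^P_N$ over $\overline{\IQ}$ from the GIT construction of $\km^{\overline{\IQ}}_{2d}$, equipped with its tautological family $\pi \colon \kx \to U$ and the $\pi$-very ample line bundle $\kl$ obtained by restricting $\ko_{\IP^N_U}(1)$. Applying the Simpson/Mukai construction of the relative moduli space of Gieseker semistable sheaves with Mukai vector $v=(r,\kl,s)$ one obtains a proper scheme $\km_U \to U$ whose geometric fiber over $u \in U$ is $M_{\kx_u}(v)$. The hypothesis $\gcd(r,s)=1$ ensures that semistability and stability coincide, so $\km_U \to U$ is a fine moduli space. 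Together with $v^2=L^2-2rs=0$ and Mukai's results, this gives that $\km_U \to U$ is a smooth projective family of K3 surfaces.

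The next step is to equip $\km_U/U$ with a primitive relative polarization of degree $2d$: using a (possibly only étale-locally defined) universal sheaf $\ke$ on $\kx \times_U \km_U$, one builds the Mukai polarization as the first Chern class of the image of a suitably chosen element of $v^\perp$ under the relative Fourier--Mukai transform with kernel $\ke$, exactly as in Mukai's original construction over $\IC$. This promotes $\km_U/U$ to a family of primitively $2d$-polarized K3 surfaces and therefore produces a classifying morphism $\varphi \colon U \to \km^{\overline{\IQ}}_{2d}$ by the universal property of the coarse moduli space. Because the construction $(X,L) \mapsto (M_X(v),\text{Mukai polarization})$ depends only on the isomorphism class of $(X,L)$, the map $\varphi$ is $PGL(N+1)$-invariant and descends through the GIT quotient to yield the desired morphism $g \colon \km^{\overline{\IQ}}_{2d} \to \km^{\overline{\IQ}}_{2d}$, which by construction sends $[X]$ to $[M_X(v)]$ on closed points.

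The main obstacle is the polarization step: one must check that Mukai's construction of the polarization on $M_X(v)$ extends to the relative setting and produces a genuine line bundle of the correct numerical type (degree $2d$, primitive) on every geometric fiber, and that this is all algebraic rather than analytic. Since the operations involved --- Chern classes of universal sheaves, relative pushforwards along $\pi$, and twists by classes in $v^\perp$ --- are all defined in the algebraic category, this amounts to verifying that Mukai's arguments never invoke the complex topology and that étale-local universal sheaves suffice. Once the relative polarization is in hand, the factorization of $\varphi$ through the GIT quotient, and hence the existence of $g$ as a morphism of $\overline{\IQ}$-schemes, is formal.
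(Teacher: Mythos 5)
Your overall skeleton --- relative moduli space over the Hilbert-scheme atlas $U$, a relative polarization of degree $2d$ on it, the induced classifying map to $\km^{\overline{\IQ}}_{2d}$, and descent through the GIT quotient --- is exactly the paper's, and the first and last steps are unproblematic. But the step you yourself single out as ``the main obstacle'', namely producing a primitive \emph{ample} line bundle of fiberwise degree $2d$ on $\km(v)\to U$, is where the entire content of the proof lies, and your proposed way of filling it is not actually carried out and has real difficulties. The Mukai class you describe (the image of a suitable element of $v^\perp$ under a relative Fourier--Mukai transform with kernel a universal sheaf) is at best a line bundle of the right self-intersection; nothing in your sketch shows that it, or any sign or translate of it, is relatively ample, and Mukai's identification of $v^\perp/\IZ v$ with $H^2(M_X(v),\IZ)$ is a Hodge-theoretic statement over $\IC$ that would have to be transported to $\overline{\IQ}$ before it can certify numerical properties of an algebraic line bundle there. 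Saying that one ``must check that Mukai's arguments never invoke the complex topology'' names the gap; it does not close it. Likewise, if the universal sheaf exists only \'etale-locally, your classifying map is a priori only defined on an \'etale cover of $U$, and the descent to $U$ is not ``formal'' --- it is an extra argument you omit.

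The paper avoids all of this by a different mechanism at exactly this point: it takes the relatively ample determinant polarization $\widetilde{\kl}$ that the relative moduli space carries by construction, observes (by specializing to Picard rank one and using local constancy of the intersection number) that its fiberwise degree is $a\cdot 2d$ with $a$ a square, and then extracts the corresponding root of $\widetilde{\kl}$ as a section of the \emph{\'etale-sheafified} relative Picard functor $\Pic_{\km(v)/U}$. Ampleness of the resulting degree-$2d$ polarization is then automatic (it is a root of the ample $\widetilde{\kl}$), at the price that it exists only after an \'etale base change $U'\to U$; the classifying map $\alpha\colon U'\to\km^{\overline{\IQ}}_{2d}$ is then descended to $U$ by checking $\alpha p_1=\alpha p_2$ on $U'\times_U U'$, which is done on closed points using reducedness. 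If you want to keep your Fourier--Mukai construction of the polarization, you must supply the ampleness and primitivity verification over $\overline{\IQ}$ and the descent of the classifying map from wherever your universal sheaf actually lives; as written, the proposal defers precisely the point that needs proof.
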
  

\begin{proof}
Consider the universal family $f: \kx \rightarrow U$, where $U$ is the above used open subscheme of the Hilbert scheme. Note that $U$ is reduced. The morphism $f$ is projective and hence there exists a relative moduli space $\km(v) \rightarrow U$ such that over $t \in U$ we have the moduli space $M_{\kx_t}(v)$ (compare \cite[Thm.\ 4.3.7]{HL}). By construction there exists a polarisation $\widetilde{\kl}$ on $M_{\kx_t}(v)$. Its intersection number on the fibres is a quadratic multiple, say $a$, of the given degree $2d$. This can be seen by looking at K3 surfaces of Picard rank 1 and using that the intersection number is (locally) constant. Now, the étale sheafification of the relative Picard functor is representable by a scheme $\Pic_{\km(v) / U}$ (see e.g.\ \cite[Ch.\ 8]{BLR}) and the image of the morphism $f: U \rightarrow \Pic_{\km(v) / U}$ defined by $\widetilde{\kl}$ lies in the image of the map $[a]: \Pic_{\km(v) / U} \rightarrow \Pic_{\km(v) / U}$ (where $[a]$ is the multiplication by $a$). We therefore have a commutative diagram
\[\begin{xy}
\xymatrix{ U \ar[rr]^f \ar[rrd]_{\widetilde{f}} & & \Pic_{\km(v) / U}  \\
						& & \Pic_{\km(v) / U} \ar[u]_{[a]}}
  \end{xy}
\]
The morphism $\widetilde{f}$ defines an element of the étale Picard functor, which by definition is represented by a line bundle $\kl'$ on the fibre product $\km(v)':=\km(v)\times_U U' $, for some étale covering $\pi: U' \rightarrow U$, with the property that $\widetilde{\pi}^*(\widetilde{\kl}) \cong \kl'^a$ ($\widetilde{\pi}$ is the natural projection). Thus, $\km(v)' \rightarrow U'$ is a family of K3 surfaces with a polarisation of degree $2d$ and we therefore get a map $\alpha: U'\rightarrow \km^{\overline{\IQ}}_{2d}$. Using descent theory described in \cite[Exp.\ VIII]{SGA} we know that there exists a morphism $\beta: U \rightarrow \km^{\overline{\IQ}}_{2d}$ such that $\beta \pi=\alpha$ if and only if $\alpha$ commutes with the two projections from $U'\times_U U'$, i.e. $\alpha p_1=\alpha p_2$. But the latter condition is clear for closed, and hence for all (everything is reduced), points by the fact that $\alpha$ is the classifying map of the family and the K3 surfaces over the closed points of a fibre of $\pi$ are all isomorphic. Thus we have a map $\beta: U \rightarrow \km^{\overline{\IQ}}_{2d}$
sending $t$ to $M_{\kx_t}(v)$. Since $\alpha$ is equivariant, it descends to a morphism $h$ from the GIT-quotient $U / PGL=\km^{\overline{\IQ}}_{2d}$ to $\km^{\overline{\IQ}}_{2d}$ so that the following diagram commutes
\[\begin{xy}
\xymatrix{ U \ar[r]^\beta \ar[d] & \km^{\overline{\IQ}}_{2d} \\
						\km^{\overline{\IQ}}_{2d} \ar@{.>}[ru]^h&}
  \end{xy}
\]   
By definition we have that $g=h$ and thus $g$ is a morphism.   
\end{proof}

\begin{remark}
Clearly the same proof works for $\km^\IC_{2d}$ (and any other algebraically closed field). In particular, we shall need $g_\IC: \km^\IC_{2d} \rightarrow \km^\IC_{2d}$.
\end{remark}

We can consider a Mukai involution from a different point of view using the following result (see \cite{Mukai}, \cite{Orlov}):

\medskip

\noindent {\bf Theorem (Mukai, Orlov)} Let $X$ and $Y$ be two complex projective K3 surfaces. The following conditions are equivalent:\newline\noindent
(a) $X$ and $Y$ are Fourier--Mukai partners (FM-partners), i.e.\ derived equivalent over $\IC$.\newline\noindent
(b) $Y$ is a fine moduli space of stable sheaves on $X$. 

\medskip

It follows from this theorem that $X_\IC:=X\times_{\Spec{\overline{\IQ}}} \Spec(\IC)$ and $M_X\times_{\Spec{\overline{\IQ}}} \Spec(\IC)=M_{X_\IC}(v)$ are FM--partners. \smallskip

\begin{remark}
If a complex K3 surface $X$ has Picard rank one, then all Fourier--Mukai partners can be determined explicitly by describing the Mukai vectors $v$ as above. Furthermore, $M_X(v)\ncong M_X(v')$ for $v\neq v'$. If $X$ is $2d$-polarised, then the number of FM-partners of $X$ is 
\begin{equation}\label{FM-number}
|FM(X)|=2^{p(d)-1},
\end{equation} 
where $p(1)=1$ and $p(d)$ is the number of prime divisors of $d$ (cf.\ \cite{Oguiso}). 
\end{remark}

From now on we will consider the case $2d=12$. In this case there is only one Mukai involution sending a K3 surface $X$ to the moduli space $M_X=M(2,l,3)$. This involution will be denoted by $g$. The first step is to invesigate its fixed point locus. It was proved in \cite{Stellari} that over $\IC$ there is precisely one divisor $D$ in the fixed point locus of $g_\IC$. We will now prove the

\begin{prop}
The fixed point locus Fix$(g)$ of the morphism $g: \km^{\overline{\IQ}}_{2d} \rightarrow \km^{\overline{\IQ}}_{2d}$ contains a divisor. 
\end{prop}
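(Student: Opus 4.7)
The plan is to deduce the proposition from the corresponding result over $\IC$ (due to Stellari) by descending along the base-change isomorphism of Lemma 2.1.

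First, I would realize the fixed locus $Z:=$ Fix$(g)$ scheme-theoretically as the preimage of the diagonal under
\[
(\id,g)\colon\km^{\overline{\IQ}}_{2d}\to\km^{\overline{\IQ}}_{2d}\times_{\overline{\IQ}}\km^{\overline{\IQ}}_{2d}.
\]
Since fibre products commute with base change, and since $g_\IC$ arises from $g$ by extension of scalars (this is the content of the remark following Proposition \ref{mor}: the construction is natural under field extensions), Lemma 2.1 identifies
\[
Z\times_{\Spec(\overline{\IQ})}\Spec(\IC)\;=\;\mathrm{Fix}(g_\IC).
\]
Stellari's theorem asserts that the right-hand side contains a divisor $D\subset\km^\IC_{2d}$, i.e.\ an irreducible component of dimension $18$.

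Next, I would invoke two standard facts: the dimension of a finite-type scheme over a field is invariant under base change to any extension field, and each irreducible component of $Z$ of dimension $d$ gives rise to a (non-empty) union of irreducible components of $Z\times_{\overline{\IQ}}\IC$ of the same dimension $d$. Combined with the existence of an $18$-dimensional component of $Z_\IC$, this forces $Z$ itself to have an irreducible component of dimension $18$. To conclude that this component is a proper divisor rather than all of $\km^{\overline{\IQ}}_{2d}$, one checks that $Z\neq\km^{\overline{\IQ}}_{2d}$: otherwise $g=\id$, hence $g_\IC=\id$, contradicting $D\subsetneq\km^\IC_{2d}$. This produces the desired divisor in Fix$(g)$.

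The main step that needs justification—and the only mild obstacle—is the displayed identification of $Z_\IC$ with $\mathrm{Fix}(g_\IC)$. This reduces to checking that the morphism $g$ constructed in Proposition \ref{mor} via the relative moduli space and descent is compatible with extension of scalars to $\IC$, which is a routine unwinding of the universal property of the relative moduli space and of the descent argument given there. Once this compatibility is in place, the rest is formal.
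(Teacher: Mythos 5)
Your proposal is correct and follows essentially the same route as the paper: realize Fix$(g)$ as the intersection of the graph of $g$ with the diagonal, note that this construction commutes with base change so that Fix$(g_\IC)=\mathrm{Fix}(g)\times_{\Spec(\overline{\IQ})}\Spec(\IC)$, and then descend Stellari's divisor. The extra details you supply (the dimension count under field extension and the check that the component is proper) are a welcome but inessential elaboration of the same argument.
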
   

\begin{proof}
The fixed point locus of a morphism can be defined as the intersection of the diagonal and the graph which are both defined over $\overline{\IQ}$. Since this construction commutes with base change we have that Fix$(g_\IC)=$Fix$(g)\times_{\Spec(\overline{\IQ})} \Spec(\IC)$. Therefore Fix$(g)$ has to contain a divisor. 
\end{proof}

\begin{remark}
The divisor in Fix$(g_\IC)$ corresponds to K3 surfaces whose Picard lattice contains a certain rank two nondegenerate even lattice. From \cite[Cor.\ 2.5 and Cor.\ 1.9]{Morrison} we know that there exists a K3 surface $X$ of Picard rank 20 in $D$. By \cite[Thm.\ 6]{Shioda} $X$ can be defined over a number field and in particular over $\overline{\IQ}$. In fact, the points of $D$ defined over $\overline{\IQ}$ are dense in $D$ which gives another proof of the above proposition. 
\end{remark}

\section{A special curve in $\km^{\overline{\IQ}}_{2d}$}

We want to construct an irreducible $g$-invariant curve in $\km^{\overline{\IQ}}_{2d}$. The strategy is rather simple: Take a curve in the quotient space $\km^{\overline{\IQ}}_{2d} / \left\langle \id, g \right\rangle$ and pull it back to $\km^{\overline{\IQ}}_{2d}$. The curve $C$ we get will clearly be $g$-invariant. The irreducibility will be achieved by using Bertini's theorem.\newline
We first recall the following 

\begin{prop}
The quotient $\kk=\km^{\overline{\IQ}}_{2d} / \left\langle \id, g \right\rangle$ is an algebraic variety. The projection map $\pi:\km^{\overline{\IQ}}_{2d} \rightarrow \kk$ is finite and surjective.
\end{prop}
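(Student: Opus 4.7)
The plan is to realize $\kk$ locally as affine GIT quotients and then glue. Since $g$ is a morphism by Proposition~\ref{mor} satisfying $g^2=\id$, the group $G := \langle \id, g\rangle$ has order two and acts on the quasi-projective variety $\km^{\overline{\IQ}}_{2d}$ by automorphisms. The result is then a standard consequence of finite group quotient theory for quasi-projective varieties, but it is worth outlining the ingredients.

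First I would invoke the classical fact that in any quasi-projective variety, every finite set of closed points is contained in an affine open subset. Applied to each $G$-orbit, and then intersecting the resulting affine open with its $g$-translate (which is again affine, since $\km^{\overline{\IQ}}_{2d}$ is separated), this produces a cover of $\km^{\overline{\IQ}}_{2d}$ by $G$-stable affine open subschemes $U_i = \Spec(A_i)$.

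On each such $U_i$ I would form the affine invariant quotient $V_i := \Spec(A_i^G)$. Because $G$ is finite of order two, $A_i$ is integral of degree at most two over $A_i^G$, so the natural morphism $U_i \rightarrow V_i$ is finite and surjective, and set-theoretically identifies $V_i$ with the orbit space $U_i/G$. Moreover $A_i^G$ is a finitely generated $\overline{\IQ}$-algebra by the classical finiteness theorem for invariants under a finite group. The $V_i$ glue along their overlaps, which are themselves affine quotients of invariant affine opens, producing a scheme $\kk$ of finite type over $\overline{\IQ}$ together with a morphism $\pi: \km^{\overline{\IQ}}_{2d} \rightarrow \kk$ that is finite and surjective, since both properties are local on the target.

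The only delicate point is the existence of the $G$-invariant affine cover (equivalently, the observation that every $G$-orbit fits into an affine open). Once this is secured, the remainder is routine affine invariant theory, and one also immediately sees that $\pi$ is the categorical quotient of $\km^{\overline{\IQ}}_{2d}$ by $G$ in the category of schemes.
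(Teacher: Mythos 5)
Your argument is correct and is essentially the same as the paper's, which simply cites Mumford's \emph{Abelian Varieties}, pp.\ 66--69, where exactly this construction (a $G$-stable affine cover via the ``finite sets of points lie in an affine open'' lemma, affine invariant quotients, and gluing) is carried out. You have just written out in detail the argument the paper delegates to the reference.
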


\begin{proof}
Since $\km^{\overline{\IQ}}_{2d}$ is quasi-projective, this follows from \cite{Mumford}, pp. 66--69.
\end{proof}

To ensure that $C$ is irreducible, we want it to be connected and regular. The first property is proved by the following  

\begin{prop}
Let $A$ be an irreducible curve in $\kk$ which is not contained in the image of the fixed point locus of $g$ but intersects it in at least one point. Then the pullback curve $C=\pi^{-1}(A)=\km^{\overline{\IQ}}_{2d}\times_\kk A$ is connected. Furthermore $g$ acts non-trivially on $C$. 
\end{prop}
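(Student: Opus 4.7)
The plan is to view $\pi\colon \km^{\overline{\IQ}}_{2d}\to\kk$ as the finite quotient map for the order-two group $\langle\id,g\rangle$: as a map of sets, its fibres are the $g$-orbits, so they have exactly one element above each point of $\pi(\mathrm{Fix}(g))$ and two elements (swapped by $g$) above each other point of $\kk$. Restriction turns $g$ into an automorphism of $C=\pi^{-1}(A)$ (the subset $C$ is $g$-invariant because $\pi\circ g=\pi$) whose orbits are exactly the fibres of $\pi|_C\colon C\to A$. The non-triviality of this action is then immediate from the hypothesis $A\not\subset\pi(\mathrm{Fix}(g))$: any point $p\in A\setminus\pi(\mathrm{Fix}(g))$ has a two-element preimage in $C$ which $g$ exchanges, so $g|_C\neq\id_C$.

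For connectedness I would argue by contradiction: assume $C=C_1\sqcup C_2$ with both $C_i$ nonempty and closed. The homeomorphism $g|_C$ permutes the two components, so it either swaps them or preserves each. If $g$ swaps $C_1$ and $C_2$, then $g|_C$ is fixed-point-free; but by hypothesis $A\cap\pi(\mathrm{Fix}(g))\neq\emptyset$, and any preimage in $C$ of such a point is a fixed point of $g$, a contradiction. If instead $g$ preserves each $C_i$, then every $g$-orbit in $C$ lies inside a single $C_i$, so $\pi(C_1)\cap\pi(C_2)=\emptyset$; since $\pi|_C$ is finite and surjective, hence closed, this displays $A$ as a disjoint union of the two nonempty closed subsets $\pi(C_1)$ and $\pi(C_2)$, contradicting the irreducibility of $A$.

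The argument is essentially formal once the identification of fibres of $\pi$ with $g$-orbits is in hand; the only real content is that the two a priori ways for $C$ to split are ruled out by, respectively, the nontriviality of $A\cap\pi(\mathrm{Fix}(g))$ and the irreducibility of $A$. I do not foresee any serious technical obstacle beyond checking that these set-theoretic arguments suffice, which they do because connectedness is a topological property of the underlying space of $C$.
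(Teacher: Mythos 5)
Your overall strategy is the one the paper uses: argue by contradiction from a separation $C=C_1\sqcup C_2$, exploit that $\pi$ is finite (hence closed) and that its fibres are the $g$-orbits, and play the one-point fibre over a point of $A\cap\pi(\mathrm{Fix}(g))$ against the irreducibility of $A$. Your argument for the non-triviality of the $g$-action and each of your two cases, taken on its own, is correct.

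There is, however, one step that fails as stated: ``the homeomorphism $g|_C$ permutes the two components, so it either swaps them or preserves each.'' An arbitrary separation $C=C_1\sqcup C_2$ need not consist of connected components ($C$ could a priori have three or more), and $g$ need not respect the chosen partition at all: if $C$ had components $K_1,K_2,K_3$ with $g$ fixing $K_1$ and interchanging $K_2$ and $K_3$, then for $C_1=K_1\cup K_2$ and $C_2=K_3$ the set $g(C_1)=K_1\cup K_3$ is neither $C_1$ nor $C_2$. So your dichotomy is not exhaustive. The gap is easy to close by choosing a $g$-stable separation: group the (finitely many, since $C$ is noetherian) connected components of $C$ into $\langle g\rangle$-orbits; each such group is a clopen $g$-invariant subset and hence a union of fibres of $\pi$. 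If there are at least two groups, their images under the closed map $\pi$ decompose $A$ into disjoint non-empty closed subsets, contradicting irreducibility --- this is your ``preserves each'' branch. If there is exactly one group, it is either a single component (so $C$ is connected and we are done) or a pair of components swapped by $g$, in which case $g$ acts freely on $C$, contradicting the existence of a $g$-fixed point in the fibre over $A\cap\pi(\mathrm{Fix}(g))$ --- your ``swaps'' branch. With this modification your proof is complete and matches the paper's in substance; the paper reaches the same contradiction slightly more directly by arguing that the images of both pieces of the separation would have to be all of $A$, which the one-point fibre forbids.
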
  

\begin{proof}
Assume the converse, then there exist disjoint closed non-empty subsets $C=W_1 \bigsqcup W_2$. Since $A$ intersects the fixed point locus, there exists a point $x \in A$ whose reduced fibre is precisely one point $y$. We may assume that $y \in W_1$. Since $\pi$ is finite and thus proper, $\pi(W_1), \pi(W_2)$ are closed in $A$. Since $A$ is irreducible and both sets are non-empty, we must have $\pi(W_1)=\pi(W_2)$. However, this is impossible, since $\pi(W_2)$ does not contain $x$. The last assertion is obvious.
\end{proof}

Next we have to make sure that $C$ is regular. To do this we use Bertini's theorem: Let $A$ be given as an intersection of hyperplanes. If these hyperplanes are generic, then $A$ is regular away from the singularities of $\kk$. In order to control these we need the following 

\begin{lem}
Let $R$ be a normal integral domain, let $H$ be a finite group of automorphisms of $R$ and let $R^H$ be the ring of invariants. Then $R^H$ is normal.
\end{lem}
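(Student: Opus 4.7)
The plan is to realize $R^H$ as a subring of $R$ and exploit the normality of $R$ through a short integrality argument inside the fraction field.

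First I would record the elementary setup: since $R^H\subseteq R$ is a subring of an integral domain, $R^H$ is itself an integral domain, so it makes sense to form $L:=\operatorname{Frac}(R^H)$. The action of $H$ on $R$ extends uniquely to an action by field automorphisms on $K:=\operatorname{Frac}(R)$ via $\sigma(a/b):=\sigma(a)/\sigma(b)$. Any element of $L$ is a quotient of two $H$-invariants, hence lies inside the fixed field $K^H\subseteq K$; in particular the natural inclusion $L\hookrightarrow K$ identifies $L$ with a subfield of $K^H$.

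Next I would verify the defining property of normality for $R^H$, i.e.\ that $R^H$ is integrally closed in $L$. Take $x\in L$ integral over $R^H$, satisfying a monic relation $x^n+a_{n-1}x^{n-1}+\cdots+a_0=0$ with $a_i\in R^H\subseteq R$. Viewing this inside $K$, the element $x$ is integral over $R$; since $R$ is normal, $x\in R$. On the other hand, $x\in L$ is $H$-invariant by the first step, so $x\in R\cap K^H=R^H$. This shows $R^H$ is integrally closed in its fraction field, i.e.\ normal.

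I do not foresee a real obstacle: the only point to be careful about is the verification that the $H$-action on $R$ extends to $K$ and that $L$ sits inside $K^H$, which ensures that the invariance of $x\in L$ combined with $x\in R$ indeed forces $x\in R^H$. Everything else is the one-line observation that integrality over $R^H$ implies integrality over $R$.
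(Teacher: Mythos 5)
Your proof is correct and follows essentially the same route as the paper's: take an element of $\operatorname{Frac}(R^H)$ integral over $R^H$, observe it is integral over $R$ and hence lies in $R$ by normality, and then use $H$-invariance to conclude it lies in $R^H$. The only difference is that you spell out the extension of the $H$-action to the fraction field, a detail the paper leaves implicit.
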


\begin{proof}
If $z\in Q(R^H)$ is integral over $R^H$, it is also integral over $R$ and hence $z \in R$, since $R$ is normal. On the other hand, $h(z)=z$ for all $h \in H$ and therefore $z \in R^H$. Thus, $R^H$ is normal.
\end{proof}

Since we know $\km^{\overline{\IQ}}_{2d}$ to be normal, it follows from the lemma that $\kk$ is normal as well. Thus, if $A$ is generic among those curves intersecting the fixed point locus in a generic point of the divisor, it will be regular and the same will hold for $C$ (cf. \cite[III, Cor.\ 10.9]{H}). Thus, we have proved

\begin{prop}
There exists a $g$-invariant connected and regular, hence irreducible, curve $C$ (on which $g$ acts non-trivially) in $\km^{\overline{\IQ}}_{2d}$.\qqed
\end{prop}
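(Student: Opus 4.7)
The plan is to realize $C$ as the preimage under $\pi$ of a suitably generic curve $A$ in $\kk$, as sketched in the paragraph preceding the statement. By Proposition 4.1, $\kk$ is a quasi-projective variety, so I fix a locally closed embedding $\kk \hookrightarrow \IP^N_{\overline{\IQ}}$. By Proposition 3.3 the fixed locus $\mathrm{Fix}(g)$ contains a divisor of $\km^{\overline{\IQ}}_{2d}$, and since $\pi$ is finite and surjective (Proposition 4.1) its image $\pi(\mathrm{Fix}(g))$ is again a divisor in $\kk$. By Lemma 4.3, $\kk$ is normal, hence its singular locus has codimension at least two, so I can fix a point $x_0 \in \kk$ that is simultaneously a regular point of $\kk$ and a smooth point of the divisor $\pi(\mathrm{Fix}(g))$.

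First I would apply Bertini's theorem in the form of \cite[III, Cor.\ 10.9]{H} to produce an irreducible curve $A \subset \kk$ as the intersection of $\kk$ with $18$ generic hyperplanes of $\IP^N_{\overline{\IQ}}$ all passing through $x_0$. For a sufficiently generic choice, $A$ is regular at every point of the regular locus of $\kk$, meets $\pi(\mathrm{Fix}(g))$ transversely at $x_0$, and is not contained in $\pi(\mathrm{Fix}(g))$. The hypotheses of Proposition 4.2 are then satisfied, so $C := \pi^{-1}(A)$ is $g$-invariant, connected, and carries a non-trivial $g$-action.

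To finish I need to check that $C$ is regular. The morphism $\pi$ is étale on the complement of the ramification locus $\mathrm{Fix}(g)$ (where $\langle \id,g \rangle$ acts freely), and regularity transports through an étale base change, so $C$ is regular away from $\pi^{-1}(\pi(\mathrm{Fix}(g)))$. At a point $y \in C$ with $\pi(y)=x_0$, regularity follows from the transversality of $A$ to $\pi(\mathrm{Fix}(g))$ at $x_0$: locally $\pi$ is a degree-two cover branched along $\mathrm{Fix}(g)$, and a parameter on $A$ transverse to the branch divisor pulls back to a uniformizer of $C$ at $y$. Once $C$ is both connected and regular, it is irreducible, which gives the claim.

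The main obstacle is the last step, the regularity check at the ramification points: the Bertini argument controls only the regular part of $\kk$, so one has to build the transversality condition into the choice of hyperplanes through $x_0$ and then argue by hand using the local structure of the quotient map. The other ingredients—quasi-projectivity, finiteness of $\pi$, normality of $\kk$, and connectedness of the preimage—are already in place from the preceding results.
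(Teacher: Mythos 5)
Your proposal is correct and follows the paper's overall strategy: pull back a generic complete-intersection curve $A$ from the quotient $\kk$, get connectedness and the non-trivial $g$-action from Proposition 4.2, and use normality of $\kk$ (Lemma 4.3) together with Bertini to control regularity. Where you diverge is in how the regularity of $C$ itself is obtained. The paper never analyzes the quotient map locally: it applies Bertini in characteristic zero (\cite[III, Cor. 10.9]{H}) directly \emph{upstairs}, to the base-point-free linear system $\pi^*|H|$ restricted to the regular locus of $\km^{\overline{\IQ}}_{2d}$; since the singular locus has codimension at least two (by normality) and is avoided by a generic intersection of $18$ members, $C$ comes out regular with no further work. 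Moreover, no assigned base point is needed: a fully generic complete-intersection curve in the $19$-dimensional $\kk$ automatically meets the $18$-dimensional divisor $\pi(\mathrm{Fix}(g))$ in a nonempty finite set of generic points of that divisor, so the hypothesis of Proposition 4.2 holds for free. Your route --- forcing $A$ through a chosen point $x_0$ of the branch divisor and then checking regularity of $C$ at the ramification by linearizing the involution and using transversality --- is also valid (the local computation with the degree-two cover is correct at a point where $\km^{\overline{\IQ}}_{2d}$ is regular and $\mathrm{Fix}(g)$ is smooth, which genericity guarantees), and it has the merit of making explicit what happens at the branch points, at the cost of an extra local argument that the paper's formulation of Bertini renders unnecessary. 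One small point to tighten: a generic $A$, with or without the base point, will meet $\pi(\mathrm{Fix}(g))$ in several points rather than only $x_0$, so the transversality and regularity check must be carried out at every point of $A \cap \pi(\mathrm{Fix}(g))$; this causes no trouble, since for generic hyperplanes all these intersection points are generic in the divisor and all intersections are transverse, but as written your argument only treats the fiber over $x_0$.
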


However, $\km^{\overline{\IQ}}_{2d}$ is just a coarse moduli space and thus a priori we do not have a family over $C$. This problem is avoided by the following

\begin{prop}
There exists a family $\kx' \rightarrow C'$ over an irreducible curve $C'$ such that the classifying map of this family is a finite surjective morphism $C' \rightarrow C$. Denoting the function fields $K(C)$ resp.\ $K(C')$ by $\IK$ resp.\ $\IL$ we furthermore have that the inclusion $\IK \rightarrow \IL$ is a Galois extension.
\end{prop}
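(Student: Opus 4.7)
The plan is to exploit the Hilbert scheme $U$ appearing in the construction of $\km^{\overline{\IQ}}_{2d}$: it carries a tautological family $\kx_U \to U$ of embedded $2d$-polarized K3 surfaces, and the quotient map $q: U \to \km^{\overline{\IQ}}_{2d}$ is a GIT quotient by $G = PGL(N+1)$. Roughly, I would pull $C$ back to $U$, cut out a curve by generic hyperplanes, then pass to a Galois closure.

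Concretely, consider the reduced preimage $U_C := q^{-1}(C)_{\rm red}$. Since $G$ is connected and $q$ is a geometric quotient whose closed fibres are single $G$-orbits, $U_C$ is irreducible of dimension $1+\dim G$. Intersecting $U_C$ with $\dim G$ sufficiently general hyperplanes in the projective ambient space of $U$ yields, by Bertini in characteristic zero, an irreducible curve $C'' \subset U_C$. For a generic choice the intersection meets a dense set of $G$-orbits, so the restriction $q|_{C''}\colon C'' \to C$ is dominant, hence finite and surjective (projective with generically finite fibres between curves). The restricted family $\kx_U|_{C''} \to C''$ is then a family of primitively $2d$-polarised K3 surfaces whose classifying morphism $C'' \to \km^{\overline{\IQ}}_{2d}$ factors through the inclusion $C \hookrightarrow \km^{\overline{\IQ}}_{2d}$, precisely because $C'' \subset U_C = q^{-1}(C)_{\rm red}$.

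To upgrade $K(C'')/K(C)$ to a Galois extension, let $L$ be the Galois closure of $K(C'')/K(C)$ inside some algebraic closure, and let $C' \to C$ denote the normalisation of $C$ in $L$. Then $C'$ is an irreducible curve, the map $C' \to C$ is finite and surjective, $K(C')/K(C) = L/K(C)$ is Galois, and since $L \supseteq K(C'')$ there is a canonical finite surjection $C' \to C''$ along which we pull back the family $\kx_U|_{C''}$ to obtain the desired family $\kx \to C'$. The composition $C' \to C'' \to C$ is the claimed finite classifying map. The main technical obstacle here is really the first step: verifying that $q^{-1}(C)$ admits an irreducible general linear section that is finite and surjective over $C$. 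This relies on the connectedness of $G$ (to give irreducibility of $U_C$), on Bertini's theorem over the algebraically closed field $\overline{\IQ}$, and on a dimension count showing that a generic linear section intersects enough $G$-orbits transversally; the remaining Galois-closure step is then purely field-theoretic and formal.
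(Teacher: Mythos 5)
Your overall route is the same as the paper's: the paper disposes of the first statement by citing a general result of Viehweg (Theorem 9.25 of \cite{Viehweg}), remarking only that ``the wanted curve lives in the Hilbert scheme'', and then performs exactly your Galois-closure step (pass to the normal closure of $K(\widetilde{C})/K(C)$ and to the corresponding curve with the pulled-back family). So the field-theoretic half of your argument matches the paper essentially verbatim, and the first half is an attempt to prove from scratch what the paper quotes.

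That attempt has a genuine gap at the step ``hence finite and surjective (projective with generically finite fibres between curves)''. The variety $U_C=q^{-1}(C)_{\rm red}$ is only quasi-projective, since $U$ is an open subscheme of the Hilbert scheme and $C$ is a curve in the quasi-projective $\km^{\overline{\IQ}}_{2d}$; hence its general linear section $C''$ is a locally closed, not projective, curve, and a dominant quasi-finite morphism between quasi-projective curves need not be finite or surjective (consider $\IA^1\setminus\{0\} \rightarrow \IA^1$). Two further points are asserted rather than proved: that $U_C$ is irreducible (one needs that its components are $G$-invariant because $PGL(N+1)$ is connected, that each has dimension exceeding $\dim G$ and therefore dominates $C$, and that two dominating components must share the generic fibre), and that a \emph{single} general linear space of codimension $\dim G$ meets the open orbit $q^{-1}(c)$ for \emph{every} $c\in C$ simultaneously; for fixed $c$ this is a dense open condition, but over the countable field $\overline{\IQ}$ the intersection of these conditions over all $c$ requires an incidence-variety argument that is not immediate. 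Your Galois-closure step does repair finiteness and surjectivity of $C'\rightarrow C$, since the normalisation of $C$ in a finite field extension is finite and surjective, but the family pulled back from $C''$ is then only defined on the open subset of $C'$ lying over $C''$, not on all of $C'$, which is what the proposition asserts. None of this matters for the paper's subsequent use of the result, where only the generic fibre and the function fields intervene, but as a proof of the statement as written it is incomplete; this missing technical content is precisely what the citation of Viehweg supplies in the paper.
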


\begin{proof}
The first statement is well-known, cf.\ e.g.\ \cite[Thm.\ 9.25]{Viehweg} (the general idea is that the wanted curve lives in the Hilbert scheme). Denote the covering curve by $\widetilde{C}$. Considering function fields we have a finite (and of course separable) extension $\IK=K(C) \rightarrow \IL':=K(\widetilde{C})$. If this extension is not normal, we can take the normal closure $\IL$ of $\IL'$ to get a Galois extension $\IK \rightarrow \IL$. Geometrically this just corresponds to a finite surjective morphism from a new curve $C' \rightarrow \widetilde{C}$ and a (pullback-)family over $C'$. Hence the result. 
\end{proof}

The morphism $g$ induces an automorphism of $\IK$ which can be lifted to an automorphism $\widetilde{g}$ of $\overline{\IK}=\overline{\IL}$ (see e.g.\ \cite[Thm.\ 6]{Yale}). Considering the composition of $\widetilde{g}$ with the inclusion $\IL \rightarrow \overline{\IK}$ and using the 
normality of $\IL$, we see that we get an automorphism $g'$ of $\IL$ which clearly extends $g$. This automorphism then gives an automorphism of the curve $C'$. Since $g'$ is an extension of $g$, it has the same geometric interpretation, namely sending a fibre $X$ to a K3 surface which is isomorphic to $M_X(v)$.

The geometric fibre of the generic point of $C'$ is a K3 surface and therefore the generic fibre itself is as well (use \cite[III, Prop.\ 9.3]{H}). Denote this K3 surface over $\IL$ as $X_\IL$.    
Base change via $g'$ gives a second K3 surface over $\IL$ which by construction is M$_{X_{\IL}}(2,l,3)=:X'_\IL$. Now fix an imbedding $i$ of $\IL$ into $\IC$. Denoting the induced action of $g'$ on $\Spec(\IL)$ by $g'^*$, we have the following diagram:
\[\begin{xy}
\xymatrix{ X_\IC \ar[r] \ar[d] & X_\IL \ar[d] & X'_\IL \ar[l] \ar[d]  & X'_\IC \ar[d] \ar[l]\\
	   \Spec(\IC) \ar[r]^{i^*} & \Spec(\IL) & \Spec(\IL) \ar[l]_{g'^*}  & \Spec(\IC) \ar[l]_{i^*}.}
  \end{xy}
\]  
Extending the automorphism $g'$ of $\IL$ to $\widehat{g} \in \Aut(\IC)$ we see that $X_\IC$ and $X'_\IC$ are conjugate via $\widehat{g}$. Clearly $X'_\IC \cong M_{X_\IC}(2,l,3)$ and thus $X'_\IC$ is not isomorphic to $X_\IC$ for a generic (i.e.\ of Picard rank 1) $X_\IC$. Hence $X_\IC$ and $X'_\IC$ are Fourier--Mukai partners and therefore $X_\IC$ and $\widehat{g}$ are as in the theorem, which is therefore proved.

\begin{remark}
One might ask whether the phenomenon described above is at all special, since a priori it might be that any Fourier--Mukai partner of a complex K3 surface $X$ is obtained by conjugation and/or vice versa. However, this is not the case as illustrated by the following: A generic quartic $X$ in $\IP^3_\IC$ given by a polynomial $P$ has Picard rank 1 and no non-trivial Fourier--Mukai partners by formula \ref{FM-number}. However, conjugating $X$ with a generic automorphism $\sigma$ of $\IC$ will produce a non-isomorphic conjugate quartic $X^\sigma$, since conjugation in this case amounts to applying $\sigma$ to the coefficients of $P$. Hence a non-isomorphic conjugate of a K3 surface need not be a Fourier--Mukai partner.

It should also be true that there exists a generic complex K3 surface having a Fourier--Mukai partner which is not a conjugate surface (for Picard rank 2 there are examples of derived equivalent K3 surfaces with non-isometric Picard lattices, see \cite{Stellari2}, so they cannot be conjugate). By formula \ref{FM-number} it would e.g.\ be enough to have a K3 surface defined over $\IQ$ (so it remains fixed under conjugation) which has Picard rank 1 and is of degree $2d$ with $d$ having several prime divisors. In \cite{vanL} the author gives an explicit example of a quartic K3 surface with Picard rank 1 which is defined over $\IQ$. Hopefully the techniques in \cite{vanL} can be used more generally to produce a K3 surface of sufficiently high degree.
\end{remark}

We can use the result proven above to produce higher-dimensional examples of derived equivalent conjugate varieties. To do this recall that given a projective scheme $W$ over $\IC$ with an embedding $W \subset \IP_\IC^r$, a numerical polynomial $P(t) \in \IQ[t]$ and a scheme $S$ over $\IC$ one defines 
\[
\underline{\Hilb}^W_{P(t)}(S)=\left\{ Z \subset Y\times S \; | \; Z \; \text{proper and flat}/ S,  \; P(Z_s)=P \; \forall \: s \in S \right\}.
\]
This gives a contravariant functor from the category of schemes over $\IC$ to the category of sets. This functor is representable by a projective scheme, cf.\ \cite[Ch.\ 2]{HL}. If $P(t)=n$ is constant, then we will denote the functor by $\underline{\Hilb}^n_W$ and the scheme representing it, the \emph{Hilbert scheme}, by $\Hilb^n(W)$ (of course, this construction works over other fields than $\IC$).  

For a K3 surface $X$ the Hilbert schemes $X^{[n]}:=\Hilb^n(X)$ are so called \emph{irreducible holomorphic symplectic varieties} or \emph{hyperk\"ahler varieties}. This, for us, means that $X^{[n]}$ is a simply-connected projective variety such that the space of global sections of $\Omega^2_{X^{[n]}}$ is generated by a closed non-degenerate holomorphic two-form. The dimension of $X^{[n]}$ is $2n$. We will need the following easy 

\begin{lem}
Let $X$ be a complex projective variety. Then we have the equality $(X^{[n]})^\sigma=(X^\sigma)^{[n]}$ for any automorphism $\sigma$ of $\IC$.
\end{lem}

\begin{proof}
First, note that the map $f \mapsto f^\sigma$ defines an isomorphism $\Hom(S,T)\cong\Hom(S^\sigma,T^\sigma)$ for schemes $S, T$ over $\IC$ and $\sigma \in \Aut(\IC)$, and, writing $\tau$ for $\sigma^{-1}$, the equality $(S^\sigma)^{\tau}\cong S$ holds. Furthermore we also have $(S\times T)^\sigma \cong S^\sigma\times T^\sigma$. The claimed equality now follows from
\[\Hom(S, \Hilb^n(X^\sigma))=\left\{ Z \subset S\times X^\sigma \, | \, Z \, \text{proper}\, \& \, \text{flat}/ S,  \, P(Z_s)=n \: \forall \: s \in S \right\}\cong\]
\[\cong\left\{ Z^\tau \subset S^\tau \times X \; | \; Z^\tau \; \text{proper}\, \& \, \text{flat}/ S^\tau,  \; P(Z^\tau_s)=n \; \forall \: s \in S \right\}=\]
\[=\Hom(S^\tau, \Hilb^n(X))\cong\Hom(S, \Hilb^n(X)^\sigma),\]
since an object representing a functor is unique up to isomorphism.
\end{proof}

Now recall from \cite[Prop.\ 8]{Ploog} that if we have an equivalence $\Db(X)\cong \Db(Y)$ of two smooth projective surfaces $X$ and $Y$, then there is also an equivalence $\Db(\Hilb^n(X))\cong \Db(\Hilb^n(Y))$. Thus, we derive the

\begin{thm}\label{conj-ihs}
There exist a hyperk\"ahler variety $Y$ and an automorphism $\sigma$ of $\IC$ such that $Y$ and $Y^\sigma$ are derived equivalent. 
\end{thm}

\begin{proof}
Set $Y=\Hilb^n(X)$ with $X$ as in the main theorem and use the lemma.
\end{proof}

\begin{remark}
If $X$ and $Y$ are non-isomorphic K3 surfaces, then it is not true in general that $\Hilb^n(X)$ and $\Hilb^n(Y)$ are also non-isomorphic for all $n$, cf.\ \cite[Ex.\ 7.2]{Yoshioka} where the author gives an example such that the Hilbert schemes are indeed isomorphic for $n=2$ and $n=3$. Thus, in the above theorem we might have produced isomorphic Fourier--Mukai partners. However, we believe that the examples in \cite{Yoshioka} are rather special and that the theorem will indeed produce non-isomorphic Hilbert schemes in the general case. 
\end{remark}

\section{Abelian surfaces}

We will now show that a similar result holds for abelian surfaces. First, recall that for any algebraically closed field $\IK$ of characteristic $0$ there exists a coarse moduli space $\ka_{(1, t)}$ for polarised abelian surfaces of type $(1, t)$. This moduli space is a quasi-projective normal threefold  (see e.g.\ \cite{GIT}).

Using \cite{BH2} we know that there exists an involution $f$ on $\ka_{(1, t)}$ sending a polarised abelian surface $(A, L)$ to the dual polarised abelian surface $(\widehat{A}, \widehat{L})$ (thus Prop. \ref{mor} is known for abelian surfaces). Here, the dual polarisation is e.g.\ defined by demanding that its pullback under the isogeny induced by $L$ is $tL$. For other descriptions of $\widehat{L}$ (in the case where the ground field is $\IC$) see \cite{BH1}.

In the case $\IK=\IC$ we use \cite[Thm.\ 3.4]{GH} to deduce that the fixed point locus of $f_\IC$ contains at least one divisor. Same arguments as in the K3 case show that Fix$(f)$ contains a divisor as well. We can then proceed as in section 4 to produce a certain curve in $\ka_{(1, t)}$. To conclude we just note that by the classical results of Mukai, see \cite{Muk-ab}, $A$ and $\widehat{A}$ are always derived equivalent. 

\begin{remark}
Similarly to the case of K3 surfaces one can associate a hyperk\"ahler variety $K_n(A)$, $n\geq 1$, to any abelian surface $A$. It is of dimension $2n$ and is called the \emph{generalised Kummer variety} of $A$. The name stems from the equality $K_1(A)={\rm{Kum}}(A)$. One could hope to use the above theorem to produce yet another example of derived equivalent conjugate hyperk\"ahler varieties. However, in the K3-case we used the fact that derived equivalent K3 surfaces have derived equivalent Hilbert schemes. Unfortunately, a similar result is missing for the generalised Kummer varieties. In \cite{Namikawa} the author gives an example which shows that the generalised Kummer varieties of derived equivalent abelian surfaces need not be birational. It was shown in \cite{HLOY2} that for any two abelian surfaces $A$ and $B$ one has
\[ \Db(A) \cong \Db(B) \Longleftrightarrow \Db({\rm{Kum}}(A))\cong \Db({\rm{Kum}}(B)) \Longleftrightarrow {\rm{Kum}}(A) \cong {\rm{Kum}}(B).\]
It is open what kind of statement one has in higher dimensions.  
\end{remark}

\section{Appendix}
In this section we will give a different proof of Proposition \ref{mor}. The strategy is as follows: First prove that any Mukai involution is an analytic morphism of the moduli space over $\IC$, then show that it is in fact algebraic and using this conclude that a Mukai involution over $\overline{\IQ}$ is a morphism as well. We will use the usual notations: In particular, $\Lambda=U^{\oplus 3}\oplus E_8(-1)^{\oplus 2}\cong H^2(X,\IZ)$ (where $X$ is a complex K3 surface) is the K3 lattice, $\widetilde{H}(X,\IZ)$ is the integral cohomology of $X$ endowed with the Mukai pairing, $\Omega=\left\{[x] \in \IP(\Lambda_\IC)\; | \; x^2=0, x\overline{x}>0\right\}$ is the period domain, $\Lambda_{2d}=\left\langle k\right\rangle \oplus U^{\oplus 2}\oplus E_8(-1)^{\oplus 2}$, where $k^2=-2d$, is the orthogonal complement of a vector $h \in \Lambda$ with $h^2=2d$ (using this we can then define $\Omega_{2d}$), $\Gamma(h)=\left\{ g \in O(\Lambda) \; | \; g(h)=h\right\}$ and finally $\Gamma_{2d}=\im (\Gamma(h) \rightarrow O(\Lambda_{2d}))$.   

\begin{prop}
Any Mukai involution is an automorphism of $\km^\IC_{2d}$ considered as a complex-analytic space.
\end{prop}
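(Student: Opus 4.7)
My plan is to verify analyticity of $g_\IC$ locally, using the fact that the Fourier--Mukai construction varies holomorphically in analytic families of K3 surfaces, and then glue. This avoids invoking GIT directly and works with the coarse moduli space in its analytic incarnation.

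I would fix a point $[X_0] \in \km^\IC_{2d}$. In the analytic category, Kuranishi theory provides a local universal family $\pi : \kx \to B$ of primitively $2d$-polarized K3 surfaces over a smooth 19-dimensional analytic germ $B$, with central fiber $(X_0, L_0)$, and an analytic neighborhood of $[X_0]$ in $\km^\IC_{2d}$ is identified with the quotient $B / \Aut(X_0, L_0)$. Forming the relative moduli space of stable sheaves on this family (cf.\ \cite[Theorem 4.3.7]{HL}, which is valid in the analytic category) gives a proper smooth analytic morphism $p : \km(v) \to B$, with fibers $M_{\kx_t}(v)$; conditions (a)--(c) on $v$ guarantee fineness and degree $2d$ on each fiber.

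To produce an analytic classifying morphism $B \to \km^\IC_{2d}$, I would repeat the polarization-descent argument of Proposition \ref{mor} in the analytic setting: the natural line bundle $\widetilde{\kl}$ on $\km(v)$ coming from the universal sheaf polarizes each fiber with degree $a^2 \cdot 2d$, an $a$-th root of $\widetilde{\kl}$ exists on an analytic \'etale cover $B' \to B$ by representability of the relative Picard functor, and the descent datum for the root is automatic because the two pullbacks to $B' \times_B B'$ classify isomorphic K3 surfaces over each closed point. The resulting analytic classifying morphism $B \to \km^\IC_{2d}$ descends further to the quotient $B/\Aut(X_0, L_0)$. The local analytic morphisms obtained this way are uniquely determined by the set map $[X] \mapsto [M_X(v)]$ together with the universal property of the analytic coarse moduli space, and so glue to a global analytic morphism $g_\IC : \km^\IC_{2d} \to \km^\IC_{2d}$; since $g_\IC^2 = \id$ on points, $g_\IC$ is an analytic automorphism.

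The main obstacle is the polarization descent on $\km(v)/B$, identical to the difficulty in the proof of Proposition \ref{mor}. The benefit of doing it analytically first is conceptual clarity: Kuranishi families of polarized K3 surfaces are immediately available without invoking Hilbert schemes and $PGL$-quotients, and analytic \'etale covers of smooth germs can be handled concretely as local biholomorphisms of polydisks. This is the natural first step before bootstrapping to algebraicity via GAGA and then, in the final step of the appendix, to $\overline{\IQ}$ via descent.
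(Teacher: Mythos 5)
Your route is genuinely different from the paper's, and it contains a real gap. The paper proves analyticity by passing to period domains: by Mukai's results there is a Hodge isometry $v^\bot/\IZ v \cong H^2(M(v),\IZ)$, and composing with $\Lambda \cong v^\bot/\IZ v$ one sees that the period point of $M_X(v)$ is obtained from the period point of $X$ by applying a fixed linear isomorphism $\IP(\Lambda\otimes\IC)\cong \IP((v^\bot/\IZ v)\otimes\IC)$ restricted to the period domain. Via the Torelli description of $\km^\IC_{2d}$ as a quotient of the period domain, the Mukai involution is therefore induced by a (linear, hence manifestly holomorphic) automorphism upstairs, and analyticity is immediate. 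Your proposal ignores the period map entirely and instead tries to rerun the relative-moduli-space construction of Proposition \ref{mor} locally in the analytic category. This is not just less efficient; the key inputs you invoke are not available off the shelf in that category. The construction of the relative moduli space of semistable sheaves in \cite[Theorem 4.3.7]{HL} is a GIT construction on relative Quot schemes over a base \emph{scheme}; your parenthetical claim that it "is valid in the analytic category" is exactly the point that needs proof and is not supplied. The same applies to the representability of the \'etale sheafification of the relative Picard functor by a scheme (\cite[Chapter 8]{BLR}), which you need for the polarization-descent step: this is an algebraic statement about schemes, not analytic germs.

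One can try to repair this by observing that a polarized Kuranishi family is locally the pullback of the universal algebraic family over the Hilbert scheme, so that the relative moduli space and the Picard-descent argument can be performed algebraically and then analytified. But at that point your argument has collapsed back into the original proof of Proposition \ref{mor}, and the analytic framing has bought nothing --- which defeats the purpose of the appendix, namely to give an \emph{independent} proof whose first step (analyticity) is easy precisely because it goes through the period domain and Mukai's Hodge isometry rather than through relative moduli spaces of sheaves. If you want a self-contained analytic argument, the missing idea is the identification of the Mukai involution with a linear isomorphism of period domains; without it, your local construction rests on unproved analytic analogues of algebraic existence theorems.
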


\begin{proof}
Let $g_\IC$ be an arbitrary Mukai involution sending $X$ to $M_X(r,l,s)=:M(v)$. We know from \cite{Mukai} that there exists a Hodge isometry $\alpha: \Lambda \cong v^\bot / \IZ v$ (where $v^\bot$ is the orthogonal complement of $v$ in the Mukai lattice $\widetilde{H}(X,\IZ)$). It is clear that $\alpha$ induces an isomorphism $\IP(\Lambda\otimes \IC) \cong \IP((v^\bot / \IZ v)\otimes \IC)$. This isomorphism restricts to an isomorphism of the period domains $\Omega$ and $\Omega'$ on both sides. 

Now recall that if $X$ is a K3 surface, then we get its associated period point $x_0\in \Omega$ by choosing an isometry $\varphi: H^2(X, \IZ) \cong \Lambda$. We get the period point $\widetilde{x}_0$ of $M(v)$ in $\Omega$ by using the isometry $\alpha^{-1}\circ \psi^{-1}: H^2(M(v), \IZ) \cong \Lambda$ where $\begin{xy}\xymatrix{\psi: v^\bot / \IZ v \ar[r]^{\cong} & H^2(M(v), \IZ)}\end{xy}$ is the isometry from \cite{Mukai}. The map $g_\IC$ sends $x_0$ to $\widetilde{x}_0$. Now, $\alpha(x_0)$ is by definition the period point of $M(v)$ considered as a point in $\Omega'$. Thus, on the level of period domains the Mukai involution $g_\IC$ is just the isomorphism $\alpha$. Factoring out the markings is compatible with this process. The same proof works in the polarised setting. 
\end{proof}

\begin{prop}
A Mukai involution $g_\IC$ is an algebraic morphism.
\end{prop}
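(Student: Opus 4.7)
The plan is to combine the previous proposition with Borel's extension theorem and Serre's GAGA. Via the period map and the Global Torelli theorem, the coarse moduli space $\km^\IC_{2d}$ is realized as a Zariski-open subvariety of the arithmetic quotient $\Gamma\backslash\Omega$, where $\Omega$ is the $19$-dimensional bounded symmetric domain of type IV attached to the K3 lattice of degree $2d$ and $\Gamma$ is an arithmetic subgroup. Its Baily--Borel compactification $(\km^\IC_{2d})^\ast$ is then a normal projective variety containing $\km^\IC_{2d}$ as a Zariski-open subset.

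First I would pass to a smooth birational model of the source: by Hironaka there exists a resolution of singularities $\widetilde{\km}\longrightarrow\km^\IC_{2d}$ together with a smooth projective compactification $\overline{V}$ of $\widetilde{\km}$ whose boundary $\overline{V}\setminus\widetilde{\km}$ is a simple normal crossings divisor. Composing the resolution with $g_\IC$ and with the open immersion $\km^\IC_{2d}\subset(\km^\IC_{2d})^\ast$ gives a holomorphic map $\widetilde{\km}\longrightarrow(\km^\IC_{2d})^\ast$ by the previous proposition. Borel's extension theorem then yields a holomorphic extension
\[
\overline{V}\longrightarrow (\km^\IC_{2d})^\ast,
\]
and Serre's GAGA promotes this extension to an algebraic morphism, since both source and target are projective complex-analytic spaces.

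Finally I would descend back to the original moduli space: the restriction of this algebraic extension to $\widetilde{\km}\subset\overline{V}$ takes values in the Zariski-open subset $\km^\IC_{2d}\subset(\km^\IC_{2d})^\ast$, yielding an algebraic morphism $\widetilde{\km}\longrightarrow\km^\IC_{2d}$ which set-theoretically is $g_\IC$ composed with the resolution. Pushing the corresponding algebraic graph forward along the proper birational morphism $\widetilde{\km}\longrightarrow\km^\IC_{2d}$, and using normality of $\km^\IC_{2d}$ (Proposition~2.3) together with Zariski's Main Theorem, exhibits the algebraic graph of a morphism that coincides with $g_\IC$.

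The main obstacle is identifying $\km^\IC_{2d}$ with (an open piece of) an arithmetic quotient in the precise form that the hypotheses of Borel's theorem demand, and then rigorously descending the algebraic extension from the smooth model $\overline{V}$ back down to the possibly singular coarse moduli space; once these two setup points are in place, the analytic-to-algebraic passage is essentially formal.
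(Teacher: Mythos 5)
Your overall strategy is a close cousin of the paper's: both arguments ultimately rest on Borel's 1972 paper. The paper quotes the algebraicity statement (a holomorphic map from a quasi-projective variety to an arithmetic quotient $\Omega/\Gamma$ is algebraic), while you re-derive that statement from Borel's extension theorem by resolving the source, compactifying with normal crossings boundary, extending to the Baily--Borel compactification, and applying GAGA/Chow to the graph. Your final descent step (pushing the graph forward along the proper birational resolution and using that a finite birational morphism onto a normal variety is an isomorphism) is fine in characteristic zero.

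However, there is a genuine gap, and it is exactly the point the paper's proof is built around: both forms of Borel's theorem -- the extension theorem and the algebraicity corollary -- require the arithmetic group $\Gamma$ to be torsion free (neat), and the monodromy group $\Gamma_{2d}$ attached to degree $2d$ polarized K3 surfaces is \emph{not} torsion free (it contains, e.g., involutions acting on the period domain). So you cannot apply the extension theorem to a map with target $Q_{2d}/\Gamma_{2d}$, nor identify $\km^\IC_{2d}$ with an open subset of such a quotient in a form to which Borel's hypotheses apply; your sentence ``Borel's extension theorem then yields a holomorphic extension'' is precisely where the argument breaks. The paper circumvents this by passing to the congruence subgroup $\Gamma_{2d}(l)$ for $l\geq 3$, which is torsion free, and by using Popp's results to produce a corresponding finite Galois level cover $U'\rightarrow U$ of the Hilbert-scheme chart with Galois group $\Gamma_{2d}/\Gamma_{2d}(l)$; Borel is applied upstairs to $U'/PGL \rightarrow Q_{2d}/\Gamma_{2d}(l)$, and algebraicity of $g_\IC$ is then deduced by descending along the finite surjective vertical quotient maps. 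Your proof would need an analogous level-cover step (applied to your smooth model $\widetilde{\km}$, whose map to $Q_{2d}/\Gamma_{2d}$ must be lifted to $Q_{2d}/\Gamma_{2d}(l)$ after a further finite cover) before the extension theorem can be invoked; without it the central analytic-to-algebraic step is not justified.
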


\begin{proof}
We want to apply the following theorem of Borel (see \cite{Borel}):

\noindent{\textit{If $Y$ is a quasi-projective variety and $f: Y \rightarrow \Omega / \Gamma$ is a holomorphic map to the quotient of a homogeneous symmetric domain $\Omega$ by an arithmetic torsion-free group $\Gamma$, then $f$ is algebraic.}}

We wish to apply the theorem to $Y=\km^\IC_{2d}$, $\Omega=\Omega_{2d}$, $\Gamma=\Gamma_{2d}$ and $f=g_\IC$. Since $\Gamma_{2d}$ is in general not torsion-free, we cannot apply the theorem directly. However, using results in \cite{Popp} (compare also the proof of Prop.\ 2.2.2 in \cite{Hassett}) we can avoid this problem by using level covers and considering the algebraic construction of $\km^\IC_{2d}$ described in Section 2. Denote the open subset of the Hilbert scheme used there by $U$. 

For $l\geq 3$ consider $\Gamma_{2d}(l)$, the $l$-th congruence subgroup of $\Gamma_{2d}$. This group is torsion-free and the projection $\Omega_{2d} / \Gamma_{2d}(l) \rightarrow \Omega_{2d} / \Gamma_{2d}$ is finite. Since the group of automorphisms of a polarised K3 surface that fix the polarisation is finite and this group acts faithfully on the cohomology of a K3 surface, we can apply \cite[Prop.\ 2.17]{Popp}, which gives us a finite Galois covering $U'$ of $U$ such that $U' \rightarrow U$ has Galois group $\Gamma_{2d} / \Gamma_{2d}(l)$. Thus we get a commutative diagram
\[
\xymatrix{ U' / PGL \ar[d] \ar[r] & \Omega_{2d} / \Gamma_{2d}(l) \ar[d] \\
				\km^\IC_{2d} \ar[r]^{g_\IC} & \Omega_{2d} / \Gamma_{2d}.}
\]
All the varieties in the diagram are quasi-projective and the vertical arrows are finite and surjective maps given as quotients of the action of a finite group. By Borel's result the upper map is algebraic and thus so is $g_\IC$. 
\end{proof}

To conclude one has to check that $g$ is also a morphism. Since we are working over algebraically closed fields we can switch to the classical language and only consider closed points. So, $g$ is a map of sets and by the above $g_\IC$ is a morphism, i.e. locally on affine sets given by polynomials which a priori could have non-algebraic coefficients. However, we know that applied to points with $\overline{\IQ}$-entries these polynomials give algebraic values. 

\begin{lem}\label{alg-val}
Let $X \subset \IA^n_{\overline{\IQ}}$ be an affine variety with coordinate ring $K[X]$ and let $p \in K[X_\IC]$ be a function having algebraic values on $X$. Then $p \in K[X]$.
\end{lem}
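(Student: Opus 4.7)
The plan is to reduce the statement to Hilbert's Nullstellensatz over $\overline{\IQ}$ by exploiting the fact that $\IC$ is a free $\overline{\IQ}$-module. Interpreting the statement as: the coordinate ring $\overline{\IQ}[X]$ sits inside $\IC[X_\IC]=\IC\otimes_{\overline{\IQ}}\overline{\IQ}[X]$ and we wish to show that $p\in\IC[X_\IC]$ already lies in $\overline{\IQ}[X]$ whenever $p(x)\in\overline{\IQ}$ for every $\overline{\IQ}$-point $x\in X(\overline{\IQ})$.

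First I would fix a $\overline{\IQ}$-vector space basis $\{e_\alpha\}_{\alpha\in A}$ of $\IC$ containing $1$, say $e_0=1$. The tensor product decomposition gives
\[
\IC[X_\IC]\;=\;\bigoplus_{\alpha\in A}\,e_\alpha\,\overline{\IQ}[X],
\]
so $p$ admits a unique finite expansion $p=\sum_\alpha e_\alpha\,p_\alpha$ with $p_\alpha\in\overline{\IQ}[X]$, and the goal becomes showing $p_\alpha=0$ for every $\alpha\neq 0$.

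Next I would use the hypothesis pointwise. For each $x\in X(\overline{\IQ})$, the values $p_\alpha(x)$ all lie in $\overline{\IQ}$, and
\[
p(x)\;=\;\sum_{\alpha\in A}e_\alpha\,p_\alpha(x)
\]
is a $\overline{\IQ}$-linear combination of the $e_\alpha$. Since by assumption $p(x)\in\overline{\IQ}=\overline{\IQ}\cdot e_0$, the $\overline{\IQ}$-linear independence of the $e_\alpha$ forces $p_\alpha(x)=0$ for all $\alpha\neq 0$ and all $x\in X(\overline{\IQ})$. Finally, invoking Hilbert's Nullstellensatz for the algebraically closed field $\overline{\IQ}$, a polynomial $p_\alpha\in\overline{\IQ}[X]$ that vanishes on every $\overline{\IQ}$-point of $X$ is identically zero in $\overline{\IQ}[X]$. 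Hence $p=p_0\in\overline{\IQ}[X]$, which is the claim.

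The entire argument is essentially a single application of flatness plus the density of $X(\overline{\IQ})$ in $X$; the only point that requires a moment of care is ensuring that $1$ can be extended to a $\overline{\IQ}$-basis of $\IC$ (so that $\overline{\IQ}$ appears as a direct summand), and that the chosen affine variety $X$ is reduced so that the Nullstellensatz produces actual vanishing in the coordinate ring. Neither constitutes a real obstacle, so I expect the proof to be short.
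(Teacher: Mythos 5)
Your proof is correct, but it follows a genuinely different route from the paper's. The paper argues by Galois descent: it sets $G=\{\sigma\in\Aut(\IC):\sigma|_{\overline{\IQ}}=\id\}$, observes that $\IC^G=\overline{\IQ}$ and hence $K[X_\IC]^G=K[X]$, and then shows $p=gp$ for every $g\in G$ because $p-gp$ vanishes on $X_\IC(\overline{\IQ})$, which is Zariski dense in $X_\IC(\IC)$. You instead decompose $\IC[X_\IC]=\bigoplus_\alpha e_\alpha\,\overline{\IQ}[X]$ along a $\overline{\IQ}$-basis of $\IC$ containing $1$ and kill each component $p_\alpha$ with $\alpha\neq 0$ by the Nullstellensatz over $\overline{\IQ}$; every step checks out. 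Your version is the more elementary one: it avoids the nontrivial fact that the fixed field of $\Aut(\IC/\overline{\IQ})$ is exactly $\overline{\IQ}$ (which needs transcendence bases and Zorn's lemma), and since $p$ has only finitely many coefficients you really only need to extend $\{1\}$ to a basis of a finite-dimensional subspace of $\IC$, so no appeal to choice is necessary at all. What the paper's formulation buys is thematic fit --- the whole note is about the action of $\Aut(\IC)$, and the identity $p=gp$ is literally the descent datum --- at the cost of invoking density of $X_\IC(\overline{\IQ})$ in $X_\IC(\IC)$ rather than the Nullstellensatz over $\overline{\IQ}$ directly. Your caveat about reducedness is well placed and applies equally to the paper's argument (vanishing on a dense set only forces $p-gp=0$ in a reduced ring); it is harmless here because affine varieties are taken reduced and reducedness is preserved under the extension $\overline{\IQ}\subset\IC$ in characteristic zero.
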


\begin{proof}
Set 
\[H:=\left\{\sigma \in \Aut(\IC) \; : \; \sigma_{|\overline{\IQ}}=\id \right\},\] 
then $\IC^H:=\left\{ c \in \IC \; : \; \sigma(c)=c \ \forall \,  \sigma \in H \right\}=\overline{\IQ}$. It follows that $\IC[x_1,\ldots, x_n]^H=\overline{\IQ}[x_1,\ldots, x_n]$ and $K[X_\IC]^H=K[X]$.
Now consider $p-hp$ for a polynomial $p$ as above and $h \in H$. By assumption we have that $p-hp$ is zero on $X_\IC(\overline{\IQ})$ and therefore also on $X_\IC(\IC)$ since $X_\IC(\overline{\IQ})\subset X_\IC(\IC)$ is dense. Therefore $p-hp=0 \in K[X_\IC]$ and thus $p \in K[X]$. 
\end{proof} 

Thus we have re-proved the

\begin{prop}
The map $g$ is a morphism.\qqed
\end{prop}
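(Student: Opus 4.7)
The plan is to promote the set-theoretic map $g$ on closed points to a scheme morphism by descending the already-established complex morphism $g_\IC$ chart by chart via the preceding lemma. Specifically, I would cover $\km^{\overline{\IQ}}_{2d}$ by affine open subsets $U = \Spec(A)$ for which there exists an affine open $V = \Spec(B) \subset \km^{\overline{\IQ}}_{2d}$ with $g(U) \subset V$ on closed points; such covers exist because $\km^{\overline{\IQ}}_{2d}$ is quasi-projective and, after base change to $\IC$, the map $g_\IC$ is a continuous morphism, so $g_\IC^{-1}(V_\IC)$ is open in $\km^\IC_{2d}$ and descends to an open subset of $\km^{\overline{\IQ}}_{2d}$ on which $g$ lands in $V$.

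After base change, the preceding proposition furnishes a morphism $g_\IC\colon U_\IC \to V_\IC$ corresponding to a $\IC$-algebra homomorphism $\varphi^\IC\colon B_\IC \to A_\IC$; for each $b \in B$, the image $\varphi^\IC(b)$ is a polynomial $p_b \in A_\IC$ whose coefficients a priori lie in $\IC$. The decisive input is that $g$ sends $\overline{\IQ}$-points to $\overline{\IQ}$-points: for any K3 surface $X$ defined over $\overline{\IQ}$, the moduli space $M_X(v)$ is also defined over $\overline{\IQ}$, as noted at the start of Section 3. Consequently each $p_b$ takes algebraic values at every $\overline{\IQ}$-point of $U$, and the preceding lemma forces $p_b \in A$. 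Thus $\varphi^\IC$ restricts to a $\overline{\IQ}$-algebra homomorphism $\varphi\colon B \to A$, defining a morphism $U \to V$ of $\overline{\IQ}$-schemes that agrees on closed points with $g$. These local morphisms patch to the desired morphism $g\colon \km^{\overline{\IQ}}_{2d} \to \km^{\overline{\IQ}}_{2d}$.

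The main (mild) obstacle is the bookkeeping for the affine cover: one only knows $g$ as a set map at the outset, so it must be checked that $g^{-1}(V)$ is Zariski-open in $\km^{\overline{\IQ}}_{2d}$. This is handled by combining the Zariski-continuity of $g_\IC$ with the surjectivity of the base change $\km^\IC_{2d} \to \km^{\overline{\IQ}}_{2d}$ on closed points and the density of closed points in both schemes. Once the cover is fixed, the descent of the polynomial coefficients is a direct application of the lemma, and no further input is needed.
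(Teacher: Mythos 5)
Your proposal is correct and follows essentially the same route as the paper: the appendix also concludes by observing that $g_\IC$ is locally given by polynomials taking algebraic values on $\overline{\IQ}$-points and invoking the preceding lemma to descend the coefficients to $\overline{\IQ}$. Your extra care with the affine cover (checking that $g^{-1}(V)$ is open over $\overline{\IQ}$) is a point the paper leaves implicit, but it is handled correctly.
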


\end{document}